\documentclass[twoside, 12pt]{article}
\usepackage{mathrsfs}
\usepackage{amssymb, amsmath, mathrsfs, amsthm}
\usepackage{array}
\usepackage{algorithm}
\usepackage{algpseudocode}
\usepackage{float}
\usepackage{graphicx}
\usepackage{amsmath}  
\allowdisplaybreaks[4]  
\usepackage{color}
\usepackage{enumitem}
\setlist[enumerate,1]{label=(\arabic*)}
\usepackage{tikz}
\usepackage[top=2cm, bottom=2cm, left=2cm, right=2cm]{geometry}
\usepackage{float, caption, subcaption}
    \usepackage{diagbox}
\input{epsf}
\usepackage[colorlinks,
  linkcolor=blue,
 anchorcolor=blue,
 citecolor=blue]{hyperref}

\newtheorem{theorem}{Theorem}[section]
\newtheorem{proposition}[theorem]{Proposition}
\newtheorem{lemma}[theorem]{Lemma}

\theoremstyle{definition}

\theoremstyle{remark}

\numberwithin{equation}{section}

\newcommand{\ZZ}{\mathbb Z}

\newcommand{\calE}{\mathcal E}
\newcommand{\set}[1]{\left\{#1\right\}}
\newcommand{\abs}[1]{\left\lvert #1\right\rvert}
\newcommand{\floor}[1]{\left\lfloor #1\right\rfloor}
\newcommand{\ceil}[1]{\left\lceil #1\right\rceil}

\begin{document}
\title{\textbf{On the minimum number of monochromatic solutions to the strict Schur inequality in 2-colored integer intervals with negative left endpoint} \footnote{Supported by the National Science Foundation of China
(Nos. 12471329 and 12061059).} }

\author{Gang Yang\footnote{Graduate School of Environment and Information Sciences, Yokohama
National University, 79-2 Tokiwadai, Hodogaya-ku, Yokohama 240-8501,
Japan. {\tt gangyang98@outlook.com}}, \ Jinxia Liang \footnote{
        School of Mathematics and Statistics, Qinghai Normal University,
        Xining, Qinghai 810008, China.
        {\tt ljxqhsd@aliyun.com} }, \  Yaping Mao \footnote{Academy of Plateau
Science and Sustainability, and School of Mathematics and Statistics, Qinghai Normal University, Xining, Qinghai 810008, China. {\tt yapingmao@outlook.com; myp@qhnu.edu.cn}},  \ 
Chenxu Yang\footnote{Corresponding author: School of Science, Tianjin Chengjian University, Tianjin, Tianjin 300384, China.
{\tt cxyang@tcu.edu.cn}}, \  Ayun Zhang \footnote{School of Mathematics and Statistics, Qinghai Normal University, Xining, Qinghai 810008,
China. {\tt zhangay@qhu.edu.cn}}
}
\date{}
\maketitle

\begin{abstract}
Kosek, Robertson, Sabo, and Schaal studied the minimum number \(M_k(n)\) of monochromatic solutions to the strict Schur inequality system
$x_1\le x_2\le x_3$ and $x_1+x_2<x_3$
in \(2\)-colorings of \([k+1,k+n]\). They proved that for every fixed \(k\ge 0\),
$M_k(n)= \frac{n^3}{12(1+2\sqrt2)^2}(1+o_k(1)),$
and left open the case \(k\le -2\). In this paper, we resolve that remaining range. 
\\[2mm]
{\bf Keywords:} Monochromatic solutions; strict Schur inequality; multiplicity; integer intervals\\[2mm]
{\bf AMS subject classification 2020:} 05D10, 11B75, 05D05.
\end{abstract}

\section{Introduction}

Ramsey theory, named after the British mathematician and philosopher Frank P. Ramsey, is a branch of mathematics that focuses on the appearance of order in a substructure given a structure of a known size; see \cite{GRS90}.
Ramsey theory consists of four core problems: Ramsey theorems establish existence; Ramsey numbers find the minimal order guaranteeing a monochromatic substructure; Ramsey multiplicity compute the minimum number of monochromatic substructures; Ramsey realizations identify the colorings achieving this minimum. the appearance of order in a substructure given a structure of a known size; see \cite{Mao24}. Integer Ramsey theory is the study of monochromatic solutions to equations and patterns in finitely colored integer intervals; see the book \cite{LR04} and the papers on Ramsey multiplicity \cite{BCG10,Datskovsky,PRS,PS26,RZ,Schoen,Th09,TW17}.

Let $\ZZ$ denote the set of all integers. 
For integers $a\le b$, write
$[a,b]=\set{m\in\ZZ:a\le m\le b}$.
We study monochromatic solutions to the strict Schur inequality system
$ \calE: x_1\le x_2\le x_3, x_1+x_2<x_3$.
If $\Delta:[a,b]\to\{0,1\}$ is a $2$-coloring of $[a,b]$, let $s(\Delta;a,b)$ denote the number of monochromatic triples $(x_1,x_2,x_3)$ satisfying $\calE$, where $x_1,x_2,x_3\in [a,b]$. For integers $k\in\ZZ$ and $n\ge 1$, define
\[
M_k(n)=\min_{\Delta:[k+1,k+n]\to\{0,1\}}s(\Delta;k+1,k+n).
\]
Thus $M_k(n)$ is the multiplicity of $\calE$ on the interval $[k+1,k+n]$.

Multiplicity problems of this type are classical in additive Ramsey theory. For the Schur equation $x+y=z$, the asymptotic minimum number of monochromatic solutions was determined independently by Robertson and Zeilberger and by Schoen, and later reproved by Datskovsky \cite{Datskovsky,KRSS,RZ,Schoen}. For the inequality $\calE$, Kosek, Robertson, Sabo, and Schaal proved that for every fixed $k\ge 0$ the minimum is asymptotically $\frac{n^3}{12(1+2\sqrt2)^2}$,
and, more importantly for the present paper, they proved that on a positive interval the extremal coloring with a fixed minority-class size is a two-block coloring with the minority color placed first \cite[Theorems~1 and~7]{KRSS}. At the end of their paper they observed that the range $k\le -2$ requires additional analysis and left it open \cite[p.~1134]{KRSS}.

The aim of this paper is to settle that remaining range. Throughout, we write $k=-t, t\in\ZZ$, and $t\ge 2$.
Then $[k+1,k+n]=[1-t,n-t]$.
The interval now contains a fixed nonpositive block and, when $n\ge t+1$, a positive tail. This makes the exact counting problem more delicate than in the purely positive setting.

Our first theorem gives an exact finite reduction.
\begin{theorem}\label{thm:intro-exact}
Let $t\ge 2$ and $k=-t$.
\begin{enumerate}[label=\textnormal{(\alph*)},leftmargin=2.2em]
\item If $1\le n\le t$, then
\[
M_{-t}(n)=
\begin{cases}
\dbinom{\floor{n/2}+2}{3}+\dbinom{\ceil{n/2}+2}{3}, & 1\le n<t,\\[2mm]
\dbinom{\floor{t/2}+2}{3}+\dbinom{\ceil{t/2}+2}{3}-1, & n=t.
\end{cases}
\]

\item If $n\ge t+1$ and $N=n-t$, then
\[
M_{-t}(n)=
\min_{\varepsilon\in\{0,1\}}
\ \min_{0\le a\le t-1}
\ \min_{0\le q\le \floor{N/2}}
G_{t,N,a,\varepsilon}(q),
\]
where
\begin{align*}
G_{t,N,a,\varepsilon}(q)
={}&
S(1,q)+S(q+1,N) \\
&+\binom{a+\varepsilon+1}{2}\,q
+\binom{t-a-\varepsilon+1}{2}\,(N-q) \\
&+a\binom{q+1}{2}
+(t-1-a)\binom{N-q+1}{2} \\
&+\varepsilon\binom{q}{2}
+(1-\varepsilon)\binom{N-q}{2} \\
&+\binom{a+\varepsilon+2}{3}
+\binom{t-a-\varepsilon+2}{3}-1,
\end{align*}
and $S(a,b)$ denotes the exact number of solutions to $\calE$ in the positive interval $[a,b]$.
\end{enumerate}
\end{theorem}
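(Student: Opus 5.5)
The plan is to split every monochromatic triple according to how many of its entries are nonpositive, and then observe that each resulting piece depends on the coloring only through a few class sizes, except for the all-positive piece. Write $P=[1-t,0]$ for the nonpositive block.

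\textbf{Triples inside $P$.} Take $x_1\le x_2\le x_3\le 0$. Then $x_1+x_2\le x_2\le x_3$, and equality holds throughout iff $x_1=0$ and $x_2=x_3$. This forces $(x_1,x_2,x_3)=(0,0,0)$. So every size-3 multiset from a nonpositive color class is a solution, except $(0,0,0)$. A class with $m$ nonpositive elements therefore contributes $\binom{m+2}{3}$, minus $1$ if it contains $0$.

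\textbf{Part (a).} For $n<t$ the interval lies in $[1-t,-1]$, so $0$ is absent and the count is $\binom{m_0+2}{3}+\binom{m_1+2}{3}$ with $m_0+m_1=n$. By convexity of $m\mapsto\binom{m+2}{3}$ this is minimized at the balanced split. For $n=t$ the interval is exactly $P$, and $0$ is present in exactly one class, giving the extra $-1$.

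\textbf{Part (b): mixed triples.} Let $n\ge t+1$. For each color $c$, let $a_c$ be the number of negative elements of color $c$, let $\varepsilon_c=[\,0\text{ has color }c\,]$, and let $p_c$ be the number of positive elements of color $c$.
\begin{itemize}
\item Triples with $x_1,x_2\le 0<x_3$ are always solutions, since $x_1+x_2\le 0<x_3$. They contribute $\binom{a_c+\varepsilon_c+1}{2}p_c$.
\item Triples with $x_1\le 0<x_2\le x_3$ satisfy the inequality automatically when $x_1\le -1$. When $x_1=0$ they need $x_2<x_3$. Hence they contribute $a_c\binom{p_c+1}{2}+\varepsilon_c\binom{p_c}{2}$.
\item Triples entirely in $P$ contribute $\binom{a_c+\varepsilon_c+2}{3}$, with the single $-1$ already noted.
\item Triples with $x_1>0$ are all-positive. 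They contribute the number of monochromatic solutions in the induced $2$-coloring of $[1,N]$.
\end{itemize}

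\textbf{Reduction to two-block colorings of $[1,N]$.} All the pieces except the last depend only on $(a_0,\varepsilon_0,p_0)$, because $a_1=t-1-a_0$, $\varepsilon_1=1-\varepsilon_0$ and $p_1=N-p_0$. By swapping colors we may assume $q:=p_0\le\lfloor N/2\rfloor$, and we then write $a=a_0$, $\varepsilon=\varepsilon_0$. This relabeling costs nothing, since $a$ and $\varepsilon$ range over all of $[0,t-1]\times\{0,1\}$. For fixed $q$ it remains to minimize the positive contribution over all colorings of $[1,N]$ whose minority class has size $q$.

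Here I would invoke \cite[Theorem~7]{KRSS}: among such colorings, the two-block coloring with the minority color first, $[1,q]$ then $[q+1,N]$, is optimal. It gives exactly $S(1,q)+S(q+1,N)$. Substituting this and summing the pieces yields $G_{t,N,a,\varepsilon}(q)$. Each value of $G$ is attained by an explicit coloring, so the minimum over $(\varepsilon,a,q)$ equals $M_{-t}(n)$.

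\textbf{Main obstacle.} The delicate step is confirming that the KRSS extremal result applies verbatim for every $N$ and every $q\le N/2$, exactly and not only asymptotically. If it does not, I would reprove it by a shifting argument: exchange a minority element $y$ with a larger majority element $y'<y$ and show that the monochromatic count does not increase. The point is that the other terms are invariant under any recoloring of $[1,N]$ that preserves $q$, so only the all-positive count needs control.
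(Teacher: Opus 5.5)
Your proposal is correct and follows essentially the same route as the paper. Both use the same $PPP$, $QQQ$, $QQP$, $QPP$ decomposition with identical counts (including the single exceptional triple $(0,0,0)$), the color swap giving $q\le\floor{N/2}$, a balanced-split minimization for part (a), the KRSS two-block bound for the all-positive part, and explicit colorings for the matching upper bound. The one correction is the citation: the exact result you need is \cite[Theorem~1]{KRSS} (Theorem~\ref{thm1:kosek} here, applied with $k=0$, interval length $N$ and $m=q$), not the asymptotic Theorem~7. Since it holds for every $N$ and every $q\le N/2$, your ``main obstacle'' disappears and the shifting fallback is unnecessary.
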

Proposition \ref{prop:Sab} shows that
\[
S(a,b)=
\begin{cases}
0,&b-2a\le 0,\\[2mm]
\dfrac{u(u+1)(4u+5)}{6},&b-2a=2u\ge 2,\\[3mm]
\dfrac{(u+1)(u+2)(4u+3)}{6},&b-2a=2u+1\ge 1.
\end{cases}
\]

In Theorem \ref{thm:intro-exact} \textnormal{(b)}, the parameters have the following meaning. The integer $a$ counts the negative integers of color $0$ in $[1-t,-1]$, the parameter $\varepsilon$ records the color of $0$, and the integer $q$ counts the positive integers of color $0$ in $[1,n-t]$. For each fixed triple $(a,\varepsilon,q)$, the positive part of an extremal coloring is exactly the two-block coloring in \cite[Theorems~1 and~7]{KRSS}.

\begin{theorem}[\cite{KRSS}, Theorem 1]\label{thm1:kosek}
Let \(k\ge 0\), \(m\ge 0\), and \(n\ge 1\) be integers, with \(m\le \frac n2\). If $\Delta:[k+1,k+n]\to\{0,1\}$ is a $2$-coloring of $[k+1,k+n]$ with $|\Delta^{-1}(0)|=m$,
then
\[
s(\Delta;k+1,k+n)\ge S(k+1,k+m)+S(k+m+1,k+n).
\]
\end{theorem}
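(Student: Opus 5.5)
The plan is an exchange (compression) argument on the set $A=\Delta^{-1}(0)$, with $|A|=m\le n/2$ and $B=\Delta^{-1}(1)=[k+1,k+n]\setminus A$. Write $\sigma(X)$ for the number of solutions of $\calE$ with all three entries in $X$, so that $s(\Delta;k+1,k+n)=\sigma(A)+\sigma(B)$. For the two-block coloring $A_0=[k+1,k+m]$, $B_0=[k+m+1,k+n]$ no solution uses both colors, so $\sigma(A_0)+\sigma(B_0)=S(k+1,k+m)+S(k+m+1,k+n)$. It therefore suffices to show that $\sigma(A)+\sigma(B)$ does not increase under a suitable sequence of moves that carries any $A$ of size $m$ to $A_0$.

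The move I would use is an adjacent transposition. Suppose $A\ne A_0$. Then there is an $x$ with $x\in B$ and $x+1\in A$. Let $A'=(A\setminus\{x+1\})\cup\{x\}$ and $B'=(B\setminus\{x\})\cup\{x+1\}$. Only triples containing $x$ or $x+1$ can change their status.

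Since all entries are positive, $\calE$ is monotone: decreasing $x_1$ or $x_2$, or increasing $x_3$, preserves a solution. Hence in $A'$ every solution in which $x$ plays the role of $x_1$ or $x_2$ is gained or kept. Solutions in which $x+1$ played the role of $x_3$ may be lost, namely those with $x_1+x_2=x$ exactly. Symmetrically, in $B'$ solutions with $x+1$ as $x_3$ are gained, and those with $x$ as $x_1$ or $x_2$ may be lost, namely those where $x_3-x_2=x+1$ (resp.\ the analogous boundary condition).

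So the net change is a difference of counts of boundary configurations. Losses come from pairs in $A$ summing to $x$, and pairs in $B$ with the relevant difference, $x_3-x_1-x_2=x+1$. Gains come from pairs in $B$ summing to exactly $x$, and from pairs in $A$ with difference exactly $x+1$. I would organise these four boundary counts explicitly, including the degenerate cases where $x$ or $x+1$ occurs twice, e.g.\ $x_1=x_2=x$. The goal is to show that gains minus losses is $\le 0$ for a well-chosen transposition. A natural choice is the largest $x$ with $x\in B$, $x+1\in A$, or alternatively pushing the largest element of $A$ down into the first gap.

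The main obstacle is this sign comparison. It is not true for an arbitrary transposition, and this is exactly where $m\le n/2$ must be used. The pairs of $A$ summing to $x$ are at most about $|A\cap[k+1,x]|/2$, while the $B$-pairs at the relevant boundary are many when $B$ is large. If a single transposition fails, I would instead move a whole run of $A$ downward at once, shifting a maximal block of consecutive elements of $A$ left past a block of $B$. I would compare the totals for that block move, using monotonicity to pair each lost triple injectively with a gained one; the pairing would be of the form $x_3\mapsto x_3$ with the shifted entries reassigned.

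If this injection resists, the fallback is induction on $n$ by deleting $k+n$. If $k+n\in B$, compare with the $(n-1)$-instance using the number of $B$-pairs $(x_1,x_2)$ with $x_1+x_2<k+n$. If $k+n\in A$, use the fact that moving it to $B$ is then the case above. The inequality needed then reduces to the one-variable fact that $S(k+m+1,k+n)-S(k+m+1,k+n-1)$ is at most the number of $k+n$-topped solutions available in any admissible coloring, which Proposition \ref{prop:Sab}'s explicit formula makes checkable.
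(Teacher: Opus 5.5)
\paragraph{Verdict.} The paper gives no proof of this statement. It is quoted from Kosek, Robertson, Sabo and Schaal and used only as a black box for the $PPP$ part of the exact reduction, so your argument has to stand on its own. It does not. Your bookkeeping of boundary gains and losses is reasonable, but the decisive step, the sign of the net change, is never established. For both concrete moves you name, it is actually false.

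\paragraph{The single moves can strictly increase the count.} Take $k=0$, $n=9$, $m=4$ (so $m<n/2$ strictly), $A=\{1,2,8,9\}$, $B=\{3,4,5,6,7\}$.
\begin{itemize}
\item Here $\sigma(A)=6$ (the triples $(1,1,z),(1,2,z),(2,2,z)$ with $z\in\{8,9\}$) and $\sigma(B)=1$ (only $(3,3,7)$). The count is $7$, while $S(1,4)+S(5,9)=3$.
\item The only $x$ with $x\in B$, $x+1\in A$ is $x=7$. That swap produces $\{1,2,7,9\}$, $\{3,4,5,6,8\}$ with count $7+2=9$.
\item Pushing $\max A=9$ into the first gap $3$ produces $\{1,2,3,8\}$, $\{4,5,6,7,9\}$ with count $7+1=8$.
\end{itemize}
So this coloring is a strict local minimum for both of your single moves, and no chain of non-increasing moves of these types can even start. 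Shifting the run $\{8,9\}$ down past $\{3,\dots,7\}$ happens to land on $A_0$ here. However, you give no argument that some block shift is always non-increasing. The ``injection $x_3\mapsto x_3$ with shifted entries reassigned'' is never defined, and that comparison is the entire content of the theorem.

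\paragraph{The induction fallback does not close either.}
\begin{itemize}
\item If $k+n\in B$ and $m<n/2$, it works. The monochromatic solutions with $x_3=k+n$ are at least as many as the pairs with sum $<k+n$ in the top block $[k+m+1,k+n-1]$, whose number is $S(k+m+1,k+n)-S(k+m+1,k+n-1)$. Induction covers the rest.
\item For $m=n/2$, however, $B$ becomes the minority after deletion and the resulting bound is too weak. For $k=0$, $n=8$, $m=4$ it gives $1$, not $3$.
\item The case $k+n\in A$ is the real obstacle. You give no reason why moving $k+n$ into $B$ does not increase the count. Even if it did not, you would only get the bound for $m-1$, and $m\mapsto S(k+1,k+m)+S(k+m+1,k+n)$ is not monotone.
\item Deleting $k+n$ directly fails too. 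The number of solutions with $x_3=k+n$ can be $0$ (put $A\setminus\{k+n\}$ at the top). The inductive bound $S(k+1,k+m-1)+S(k+m,k+n-1)$ is then too small: for $k=0$, $n=9$, $m=4$ it is $1<3$.
\end{itemize}
Indeed, the ``one-variable fact'' you propose is false as stated. For $k=0$, $n=12$, $m=4$ one has $S(5,12)-S(5,11)=2$, yet the coloring with $A=[9,12]$ has no monochromatic solution with $x_3=12$.

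\paragraph{What is missing.} You need an argument that couples the top of the interval with the rest of the coloring. Few solutions topped by $k+n$ force $A$ to sit high, which forces many solutions inside $B$. Quantifying that trade-off is exactly the step your proposal leaves open.
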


By the symmetry between the two colors, it follows that
\[
M_k(n)=\min_{0\le m\le n/2}\Bigl(S(k+1,k+m)+S(k+m+1,k+n)\Bigr).
\]
Kosek, Robertson, Sabo, and Schaal then analyzed this minimum asymptotically and obtained the following result.

\begin{theorem}[\cite{KRSS} Theorem 7]\label{thm7:KRSS}
For any given integer \(k\ge 0\), the minimum number of monochromatic solutions to \(x_1+x_2<x_3\) that can occur in any \(2\)-coloring of \([k+1,k+n]\) is
\[
M_k(n)=Cn^3\bigl(1+o_k(1)\bigr),
\]
where
\[
C=\frac{1}{12(1+2\sqrt2)^2}\approx 0.005685622025.
\]
\end{theorem}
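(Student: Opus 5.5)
By the symmetry remark following Theorem~\ref{thm1:kosek}, the problem reduces to a one-variable minimization:
\[
M_k(n)=\min_{0\le m\le n/2}F(m),\qquad F(m):=S(k+1,k+m)+S(k+m+1,k+n).
\]
I would therefore replace each $S$ by its cubic main term, using the closed form in Proposition~\ref{prop:Sab}, and then minimize the resulting cubic over the real interval $[0,1/2]$.

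\textbf{Step 1: main term of $S$.} From Proposition~\ref{prop:Sab}, write $d=b-2a$.
\begin{itemize}
\item If $d=2u$, then $S(a,b)=u(u+1)(4u+5)/6=\tfrac{2}{3}u^3+O(u^2+1)=d^3/12+O(d^2+1)$.
\item If $d=2u+1$, the same expansion holds.
\item If $d\le 0$, then $S(a,b)=0$.
\end{itemize}
Hence, uniformly in $a,b$,
\[
S(a,b)=\tfrac{1}{12}(d)_+^3+O(d_+^2+1).
\]
For the two terms of $F$ the relevant values of $d$ are $m-k-2$ and $n-2m-k-2$. Since $k$ is fixed, each differs from $m$, respectively $n-2m$, by $O_k(1)$. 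Both quantities are at most $n$ in absolute value.

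\textbf{Step 2: uniform comparison with a polynomial.} Set $g(m)=m^3+(n-2m)^3$ for $0\le m\le n/2$. For $|y|\le n$ and $c=O_k(1)$ one has $(y-c)_+^3=y_+^3+O_k(n^2)$, and on this range $y_+=y$. Therefore
\[
F(m)=\tfrac{1}{12}g(m)+O_k(n^2)
\]
uniformly in $m\in[0,n/2]$. Consequently
\[
M_k(n)=\tfrac{1}{12}\min_{0\le m\le n/2,\ m\in\ZZ}g(m)+O_k(n^2).
\]

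\textbf{Step 3: calculus.} Write $m=xn$, so that $g(m)=n^3h(x)$ with $h(x)=x^3+(1-2x)^3$ on $[0,1/2]$.
\begin{itemize}
\item The equation $h'(x)=3x^2-6(1-2x)^2=0$ gives $x=\sqrt2(1-2x)$, i.e. $x_0=\sqrt2/(1+2\sqrt2)$, which lies in $(0,1/2)$.
\item At $x_0$ we have $1-2x_0=x_0/\sqrt2$, so
\[
h(x_0)=x_0^3\Bigl(1+\tfrac{1}{2\sqrt2}\Bigr)=\frac{2\sqrt2}{(1+2\sqrt2)^3}\cdot\frac{1+2\sqrt2}{2\sqrt2}=\frac{1}{(1+2\sqrt2)^2}.
\]
\item The endpoint values are $h(0)=1$ and $h(1/2)=1/8$, and both exceed $h(x_0)\approx 0.068$.
\item Restricting to integers $m$ changes $g$ by at most $\max|g'|=O(n^2)$.
\end{itemize}
Combining with Step 2 gives
\[
M_k(n)=\frac{n^3}{12(1+2\sqrt2)^2}+O_k(n^2)=Cn^3\bigl(1+o_k(1)\bigr).
\]

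\textbf{Main obstacle.} The only delicate point is the error control in Step 2. The $O(n^2)$ bound must be uniform in $m$, including the small-$m$ range where $m-k-2\le 0$ and the truncation $(\cdot)_+$ is active. The key observation is that $(y-c)_+^3$ and $y^3$ differ by $O(n^2)$ whenever $y\ge 0$ and $|y|,|c|$ are controlled. This yields matching upper and lower bounds simultaneously, so no separate construction is needed beyond the two-block colorings that Theorem~\ref{thm1:kosek} already shows are optimal.
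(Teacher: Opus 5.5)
Your proof is correct and follows the route the paper points to. The paper does not reprove this result; it cites KRSS after reducing to $\min_{0\le m\le n/2}\bigl(S(k+1,k+m)+S(k+m+1,k+n)\bigr)$. Your uniform replacement of each $S$ by $\tfrac{1}{12}d_+^3$, followed by minimizing $x^3+(1-2x)^3$ at $x_0=\sqrt2/(1+2\sqrt2)$, is the natural way to carry out that asymptotic analysis. Your argument also gives the sharper $M_k(n)=Cn^3+O_k(n^2)$, which is the form needed to justify the $O_k(n^2)$ error claimed in Theorem~\ref{thm:intro-asymptotic}; Section~\ref{sec:asymptotic} only invokes the $(1+o(1))$ version.
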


Our second theorem is the asymptotic consequence.

\begin{theorem}\label{thm:intro-asymptotic}
For every fixed integer $k\le -2$,
\[
M_k(n)=\frac{1}{12(1+2\sqrt2)^2}n^3+O_k(n^2)
\qquad (n\to\infty).
\]
Equivalently, if $k=-t$ with $t\ge 2$ and $N=n-t$, then
\[
M_{-t}(n)=\frac{1}{12(1+2\sqrt2)^2}N^3+O_t(N^2).
\]
\end{theorem}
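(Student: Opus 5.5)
We derive Theorem \ref{thm:intro-asymptotic} from the exact reduction in Theorem \ref{thm:intro-exact}(b), comparing it with the positive case $k=0$ on the tail $[1,N]$. Fix $t\ge 2$ and let $n\ge t+1$, $N=n-t$. The plan is to prove a lower bound and an upper bound, each equal to $CN^3+O_t(N^2)$ with $C=\frac{1}{12(1+2\sqrt2)^2}$. Since $n^3=N^3+O_t(N^2)$, the two displayed forms of the statement are then equivalent.

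\emph{Lower bound.} Every term of $G_{t,N,a,\varepsilon}(q)$ is nonnegative, because binomial coefficients are nonnegative and the final $-1$ is absorbed by $\binom{t-a-\varepsilon+2}{3}\ge 1$, as $t-a-\varepsilon\ge 0$. Hence
\[
G_{t,N,a,\varepsilon}(q)\ge S(1,q)+S(q+1,N)\ge M_0(N),
\]
where the second inequality holds because $S(1,q)+S(q+1,N)$ is exactly the count for the two-block coloring of $[1,N]$. Minimizing over $(a,\varepsilon,q)$ gives $M_{-t}(n)\ge M_0(N)$.

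This requires $M_0(N)\ge CN^3-O(N^2)$, which is stronger than the $o(1)$ error in Theorem \ref{thm7:KRSS}. I would obtain it directly. By Theorem \ref{thm1:kosek}, $M_0(N)=\min_{0\le m\le N/2}\bigl(S(1,m)+S(m+1,N)\bigr)$. Proposition \ref{prop:Sab} gives $S(a,b)=\frac{(b-2a)^3}{12}+O(b^2)$ when $b\ge 2a$, and $S(a,b)=0$ otherwise; the error is uniform because both parity formulas are cubic polynomials in $b-2a$ with leading coefficient $\frac{1}{12}$ after substituting $u\approx(b-2a)/2$. Write $m=xN$. Then
\[
S(1,m)+S(m+1,N)=\frac{N^3}{12}\bigl(x^3+(1-2x)^3\bigr)+O(N^2),
\]
where the first term uses $m-2=xN+O(1)$ and the second uses $N-2m-2$, with $(1-2x)$ replaced by $0$ if it is negative, although $x\le 1/2$ prevents that. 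The minimum of $f(x)=x^3+(1-2x)^3$ on $[0,1/2]$ occurs where $3x^2=6(1-2x)^2$, that is, $x=\sqrt2(1-2x)$, so $x=\frac{\sqrt2}{1+2\sqrt2}$. At this point
\[
f(x)=\frac{2\sqrt2+1}{(1+2\sqrt2)^3}=\frac{1}{(1+2\sqrt2)^2},
\]
so $\frac{N^3}{12}f_{\min}=CN^3$. Since the $O(N^2)$ term is uniform in $m$, we get $M_0(N)=CN^3+O(N^2)$ in both directions.

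\emph{Upper bound.} Take $a=0$, $\varepsilon=0$, and $q=\lfloor x^*N\rfloor$ with $x^*=\frac{\sqrt2}{1+2\sqrt2}<1/2$, so $q\le\lfloor N/2\rfloor$. The $S$-terms give $CN^3+O(N^2)$ by the computation above. Every other term of $G$ is at most $O(t^2N)$, $O(tN^2)$, $O(N^2)$ or $O(t^3)$, and the term $a\binom{q+1}{2}$ vanishes. All of these are $O_t(N^2)$. Theorem \ref{thm:intro-exact}(b) then gives $M_{-t}(n)\le CN^3+O_t(N^2)$.

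\emph{Main obstacle.} The delicate step is upgrading the $o(1)$ of Theorem \ref{thm7:KRSS} to an explicit $O(N^2)$. The error from Proposition \ref{prop:Sab} has to be controlled uniformly in $m$, including near the boundary where $b-2a$ is small or negative. This works because, on $u\ge 0$, each exact formula differs from $\frac{(b-2a)^3}{12}$ by a quadratic polynomial in $u$, so the difference is $O(N^2)$.
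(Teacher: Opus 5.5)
Your proof is correct, and it differs from the paper's at the one step that matters. The paper bounds every non-$PPP$ term of $G_{t,N,a,\varepsilon}(q)$ by $O_t(N^2)$ uniformly in $(a,\varepsilon,q)$. This gives $M_{-t}(n)=\min_{0\le q\le\lfloor N/2\rfloor}\bigl(S(1,q)+S(q+1,N)\bigr)+O_t(N^2)$, and the paper then simply cites Theorem~\ref{thm7:KRSS}. That theorem only supplies a relative error $1+o(1)$. So the paper's argument actually establishes $M_k(n)=Cn^3(1+o_k(1))$, which is how Theorem~\ref{thm:asymptotic} is stated, and not the additive $O_k(n^2)$ error claimed in this statement.

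Your direct evaluation of the one-variable minimum closes that gap. You use the uniform expansion $S(a,b)=\tfrac{1}{12}\max(b-2a,0)^3+O(b^2)$ from Proposition~\ref{prop:Sab}, minimize $x^3+(1-2x)^3$ on $[0,1/2]$, and take the explicit choice $a=\varepsilon=0$, $q=\lfloor x^*N\rfloor$ for the upper bound. This is exactly what upgrades $o(N^3)$ to $O(N^2)$, so your route proves the statement as written, while the paper's proof needs this supplement. Your split into a lower bound (drop nonnegative terms) and an explicit upper bound is just as easy as the paper's two-sided uniform bound. The detour through $M_0(N)$ is harmless but unnecessary, since $\min G\ge\min_q\bigl(S(1,q)+S(q+1,N)\bigr)$ directly.

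One small slip: when $a=t-1$ and $\varepsilon=1$, you have $\binom{t-a-\varepsilon+2}{3}=\binom{2}{3}=0$, so that term does not absorb the $-1$. In that case $\binom{a+\varepsilon+2}{3}=\binom{t+2}{3}\ge 4$ does. More simply, the whole last line of $G$ equals $N_{QQQ}\ge 0$ by Lemma~\ref{lem:QQQ}.
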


The paper is organized as follows. Section~\ref{sec:positive} records the exact solution count on a positive interval. Section~\ref{sec:nonpositive} treats the purely nonpositive case $1\le n\le t$ and proves Theorem~\ref{thm:intro-exact} (a). Section~\ref{sec:exact-reduction} treats the case $n\ge t+1$ and proves Theorem~\ref{thm:intro-exact} (b). Section~\ref{sec:asymptotic} derives the cubic asymptotic formula in Theorem~\ref{thm:intro-asymptotic}. We use the standard convention that the constants implicit in $O_t(\cdot)$ may depend on the fixed integer $t$, and similarly for $O_k(\cdot)$.

\section{Positive-interval preliminaries}\label{sec:positive}

For integers $1\le a\le b$, let
\[
S(a,b)=\left|\set{(x_1,x_2,x_3): x_1\le x_2\le x_3,\ x_1+x_2<x_3, x_1,x_2,x_3\in[a,b]}\right|.
\]
If $a>b$, we set $S(a,b)=0$ by convention. Thus $S(a,b)$ counts all solutions to $\calE$ in the interval $[a,b]$, without reference to any coloring.

We now show the exact value of $S(a,b)$.

\begin{proposition}\label{prop:Sab}
Let $1\le a\le b$. Then
\begin{equation}\label{eq:Sab-sum}
S(a,b)=\sum_{i=a}^{\floor{(b-1)/2}}\binom{b-2i+1}{2}.
\end{equation}
Equivalently,
\[
S(a,b)=
\begin{cases}
0,&b-2a\le 0,\\[2mm]
\dfrac{u(u+1)(4u+5)}{6},&b-2a=2u\ge 2,\\[3mm]
\dfrac{(u+1)(u+2)(4u+3)}{6},&b-2a=2u+1\ge 1.
\end{cases}
\]
\end{proposition}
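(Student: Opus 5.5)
The count \eqref{eq:Sab-sum} comes from fixing $x_1=i$ and counting the admissible pairs $(x_2,x_3)$; the closed form then follows by summing the resulting triangular numbers.

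\textbf{Step 1: the sum formula.} Fix $x_1=i\in[a,b]$. We need $x_2\in[i,b]$ and $x_3$ with $x_2\le x_3\le b$ and $x_3\ge i+x_2+1$. Since $i\ge 1$, the condition $x_3\ge i+x_2+1$ already implies $x_3>x_2$, so the chain condition $x_2\le x_3$ is automatic. For a fixed $x_2=j\ge i$, the number of choices of $x_3$ is $\max(0,\,b-i-j)$. Summing over $j$ from $i$ to $b-i-1$ gives
\[
\sum_{j=i}^{b-i-1}(b-i-j)=1+2+\cdots+(b-2i)=\binom{b-2i+1}{2}.
\]
This is nonzero exactly when $b-2i\ge 1$, that is, when $i\le \floor{(b-1)/2}$. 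Summing over $i$ from $a$ upward yields \eqref{eq:Sab-sum}. When $b-2a\le 0$ the range of $i$ is empty, so $S(a,b)=0$.

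\textbf{Step 2: reindexing.} Put $m=b-2i$. As $i$ runs from $a$ to $\floor{(b-1)/2}$, $m$ runs over $b-2a,\,b-2a-2,\dots$ down to $1$ or $2$, i.e.\ over the values of $m$ with the same parity as $b-2a$. Hence, with $r=b-2a$,
\[
S(a,b)=\sum_{\substack{1\le m\le r\\ m\equiv r \pmod 2}}\binom{m+1}{2}.
\]

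\textbf{Step 3: the closed forms.}
\begin{itemize}
\item[] \emph{Even case, $r=2u$.} Write $m=2s$ with $1\le s\le u$. The sum is $\sum_{s=1}^u s(2s+1)=2\sum s^2+\sum s$. Using the standard power-sum formulas, this equals $\tfrac{u(u+1)(2u+1)}{3}+\tfrac{u(u+1)}{2}=\tfrac{u(u+1)(4u+5)}{6}$.
\item[] \emph{Odd case, $r=2u+1$.} Write $m=2s+1$ with $0\le s\le u$. The sum is $\sum_{s=0}^u (s+1)(2s+1)$. Expand and sum it, or verify the target $\tfrac{(u+1)(u+2)(4u+3)}{6}$ by induction on $u$. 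The base case $u=0$ gives $1$. The inductive step requires the difference of consecutive closed forms to equal $(u+1)(2u+1)$, which is a short polynomial identity.
\end{itemize}

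The only real care point is Step 1: checking that the constraint $x_2\le x_3$ never binds, and handling the boundary of the $i$-range correctly. Both rely on $a\ge 1$. Everything after that is routine algebra.
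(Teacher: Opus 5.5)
Your proof is correct and follows the paper's argument: fix $x_1=i$, count the $\binom{b-2i+1}{2}$ admissible pairs $(x_2,x_3)$, then reindex according to the parity of $b-2a$ and sum the resulting triangular numbers. You are slightly more careful than the paper. You make explicit why the constraint $x_2\le x_3$ never binds, and you actually verify the two closed forms (via power sums and induction), which the paper only asserts.
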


\begin{proof}
Fix $x_1=i\in[a,b]$. Since $x_1\le x_2\le x_3$ and $x_1+x_2<x_3\le b$, we must have
$i\le x_2\le b-i-1$.
Hence $i\le \floor{(b-1)/2}$ is necessary. Conversely, for each such $i$ and each
$j\in[i,b-i-1]$,
the variable $x_3$ must satisfy $x_3>i+j$ and $x_3\le b$, so the possible values of $x_3$ are exactly
$i+j+1,i+j+2,\dots,b$,
which gives precisely $b-(i+j)$ choices.

Therefore, for fixed $i$, the number of pairs $(x_2,x_3)$ is
\[
\sum_{j=i}^{b-i-1}(b-i-j)=1+2+\cdots +(b-2i)=\binom{b-2i+1}{2}.
\]
Summing over all admissible $i$ yields \eqref{eq:Sab-sum}.

To obtain the closed form, write $i=a+r$. Then
$b-2i=d-2r$.
If $b-2a\le 0$, then there are no admissible values of $i$, so $S(a,b)=0$.

Assume $b-2a\ge 1$. If $b-2a=2u$, then $r=0,1,\dots,u-1$, and therefore
\[
S(a,b)=\sum_{r=0}^{u-1}\binom{2u-2r+1}{2}=\sum_{j=1}^{u}\binom{2j+1}{2}=\frac{u(u+1)(4u+5)}{6}.
\]
If $b-2a=2u+1$, then $r=0,1,\dots,u$, so
\[
S(a,b)=\sum_{r=0}^{u}\binom{2u-2r+2}{2}=\sum_{j=0}^{u}\binom{2j+2}{2}=\frac{(u+1)(u+2)(4u+3)}{6}.
\]
This completes the proof.
\end{proof}

\section{The purely nonpositive case}\label{sec:nonpositive}

We first treat the range $1\le n\le t$, where the interval $[1-t,n-t]$ contains no positive integer.

\begin{theorem}\label{thm:nonpositive}
Let $t\ge 2$ and $1\le n\le t$. Then
\[
M_{-t}(n)=
\begin{cases}
\dbinom{\floor{n/2}+2}{3}+\dbinom{\ceil{n/2}+2}{3}, & 1\le n<t,\\[2mm]
\dbinom{\floor{t/2}+2}{3}+\dbinom{\ceil{t/2}+2}{3}-1, & n=t.
\end{cases}
\]
\end{theorem}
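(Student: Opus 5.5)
The plan is to exploit the fact that when $1\le n\le t$ every element of $[1-t,n-t]$ is nonpositive. In that case the strict inequality in $\calE$ fails only in a single degenerate situation. So the count reduces to counting nondecreasing triples inside each color class, which is a pure function of the class sizes.

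\textbf{Step 1 (structure of solutions).} I will first show that if $x_1\le x_2\le x_3\le 0$, then $x_1+x_2\le x_3$ automatically. Indeed, $x_1\le 0$ gives $x_1+x_2\le x_2\le x_3$. Equality $x_1+x_2=x_3$ forces both $x_1=0$ and $x_2=x_3$, and then $x_1\le x_2\le x_3\le 0$ gives $x_1=x_2=x_3=0$. Hence every nondecreasing triple of nonpositive integers is a solution of $\calE$, except $(0,0,0)$.

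\textbf{Step 2 (count for a fixed coloring).} A set of $m$ integers contains exactly $\binom{m+2}{3}$ nondecreasing triples (multisets of size $3$). So for a coloring $\Delta$ whose classes have sizes $m$ and $n-m$,
\[
s(\Delta;1-t,n-t)=\binom{m+2}{3}+\binom{n-m+2}{3}-\delta ,
\]
where $\delta=1$ if $0\in[1-t,n-t]$ and $\delta=0$ otherwise. Since $n-t<0$ for $n<t$ and $n-t=0$ for $n=t$, we get $\delta=0$ if $n<t$ and $\delta=1$ if $n=t$. In particular, the count depends only on $m$, not on which integers receive which color.

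\textbf{Step 3 (optimize over $m$).} The function $f(m)=\binom{m+2}{3}$ is strictly convex on the integers, since $f(m+1)-f(m)=\binom{m+2}{2}$ is increasing. Therefore $f(m)+f(n-m)$ is minimized at $m=\floor{n/2}$, equivalently $n-m=\ceil{n/2}$. Every $m\in[0,n]$ is realized by some coloring, so the minimum is attained. Substituting $n=t$ in the second case gives the stated formulas.

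There is no real obstacle here. The only point needing care is Step 1, namely checking that equality $x_1+x_2=x_3$ occurs only at the all-zero triple, so that no other triples must be subtracted. It is also worth noting that the triple $(0,0,0)$ is counted in $\binom{m+2}{3}$ for whichever class contains $0$.
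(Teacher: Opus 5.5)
Your proof is correct and follows essentially the same route as the paper. The paper also shows that every nondecreasing triple in $[1-t,n-t]$ solves $\calE$ except $(0,0,0)$; it splits on $x_1<0$ versus $x_1=0$ rather than analyzing the equality case as you do. It then reduces the count to $\binom{m+2}{3}+\binom{n-m+2}{3}-\delta_{n,t}$ and minimizes via the sign of the first difference $\binom{m+2}{2}-\binom{n-m+1}{2}$, which is exactly your convexity argument.
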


\begin{proof}
Let $J=[1-t,n-t]$.
Because $n\le t$, every element of $J$ is nonpositive.

Consider a triple $(x_1,x_2,x_3)$ with
$x_1\le x_2\le x_3$ and $x_1,x_2,x_3\in J$.
If $x_1<0$, then
$x_1+x_2\le x_2-1<x_2\le x_3$,
so $(x_1,x_2,x_3)$ automatically satisfies $\calE$. If $x_1=0$, then necessarily $n=t$ and
$0=x_1\le x_2\le x_3\le 0$,
which forces $(x_1,x_2,x_3)=(0,0,0)$. This triple is not a solution to $\calE$, because $0+0<0$ is false.

Thus every nondecreasing triple in $J$ is a solution, except for the single triple $(0,0,0)$ when $n=t$.
Now fix a $2$-coloring $\Delta:J\to\{0,1\}$ and let
$m=\abs{\Delta^{-1}(0)}$.
Then color $1$ appears exactly $n-m$ times. The number of nondecreasing monochromatic triples chosen from a color class of size $m$ is the number of multisets of size $3$ drawn from $m$ ordered elements, namely $\binom{m+2}{3}$. Therefore
\[
s(\Delta;1-t,n-t)=\binom{m+2}{3}+\binom{n-m+2}{3}-\delta_{n,t},
\]
where $\delta_{n,t}=1$ if $n=t$ and $\delta_{n,t}=0$ otherwise. Consequently,
\[
M_{-t}(n)=\min_{0\le m\le n}\left(\binom{m+2}{3}+\binom{n-m+2}{3}\right)-\delta_{n,t}.
\]
It remains to minimize the symmetric function
\[
g_n(m)=\binom{m+2}{3}+\binom{n-m+2}{3}.
\]
A direct calculation gives
\[
g_n(m+1)-g_n(m)=\binom{m+2}{2}-\binom{n-m+1}{2}.
\]
If $m<n/2$, then $m+2\le n-m+1$, so the right-hand side is nonpositive. If $m>n/2$, the right-hand side is nonnegative. Hence $g_n(m)$ decreases up to the midpoint and increases afterwards, and its minimum is attained when $m$ is as close to $n/2$ as possible, namely at
$m\in\set{\floor{n/2},\ceil{n/2}}$.
Substituting these values yields the stated formulas.
\end{proof}

\section{The case involving positive integers}\label{sec:exact-reduction}

Throughout this section we fix
$t\ge 2, n\ge t+1, k=-t$, and $N=n-t\ge 1$.
Let $[1-t,n-t]=Q_t\sqcup P_N$,
where $Q_t=[1-t,0]$ and $P_N=[1,N]$.
We further split the nonpositive part into
$Q_t=R_t\sqcup\{0\}$, where $R_t=[1-t,-1]$.
Thus $R_t$ is the negative block and $Q_t$ is the nonpositive block.

Let $\Delta:[1-t,n-t]\to\{0,1\}$ be any $2$-coloring of $[1-t,n-t]$. We define the following parameters associated with $\Delta$.
\begin{align*}
a&=\abs{\Delta^{-1}(0)\cap R_t}, \qquad q=\abs{\Delta^{-1}(0)\cap P_N},\qquad 
\varepsilon=
\begin{cases}
1,&\Delta(0)=0,\\
0,&\Delta(0)=1.
\end{cases}
\end{align*}
Thus $a$ is the number of negative integers of color $0$, $q$ is the number of positive integers of color $0$, and the parameter $\varepsilon$ records the color of $0$. The ranges are
$0\le a\le t-1,\varepsilon\in\{0,1\}$, and $0\le q\le N$.

The following lemma describes the symmetry under exchanging the two colors, and shows that it suffices to consider colorings with \(q\le N/2\).
\begin{lemma}\label{lem:complement}
If $\overline\Delta=1-\Delta$, then $\Delta$ and $\overline\Delta$ have the same number of monochromatic solutions to $\calE$. Moreover, if $\Delta$ has parameters $(a,\varepsilon,q)$, then $\overline\Delta$ has parameters
\[
(t-1-a,\ 1-\varepsilon,\ N-q).
\]
Consequently, in minimizing the number of monochromatic solutions we may restrict attention to colorings satisfying $q\le \frac N2$.
\end{lemma}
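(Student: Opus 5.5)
The plan is to check the three assertions of the lemma directly from the definitions. The invariance under exchanging colors and the parameter formula are both bookkeeping. The reduction to $q\le N/2$ then follows by choosing, for any given coloring, whichever of $\Delta$ and $\overline\Delta$ has the smaller $q$.

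First, invariance of the count. A triple $(x_1,x_2,x_3)$ is a solution to $\calE$ or not independently of any coloring. It is monochromatic under $\Delta$ exactly when $\Delta(x_1)=\Delta(x_2)=\Delta(x_3)$. Since $\overline\Delta(x)=1-\Delta(x)$ and the map $c\mapsto 1-c$ is a bijection of $\{0,1\}$, the values $\overline\Delta(x_i)$ all agree exactly when the values $\Delta(x_i)$ all agree. So the two colorings have the same set of monochromatic solutions, and
\[
s(\overline\Delta;1-t,n-t)=s(\Delta;1-t,n-t).
\]

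Second, the parameters of $\overline\Delta$. We have $\overline\Delta^{-1}(0)=\Delta^{-1}(1)$. Since $\abs{R_t}=t-1$, this gives
\[
\abs{\overline\Delta^{-1}(0)\cap R_t}=(t-1)-a.
\]
Since $\abs{P_N}=N$, it gives
\[
\abs{\overline\Delta^{-1}(0)\cap P_N}=N-q.
\]
Finally, $\overline\Delta(0)=0$ holds exactly when $\Delta(0)=1$, so the parameter $\varepsilon$ becomes $1-\varepsilon$. Hence $\overline\Delta$ has parameters $(t-1-a,\,1-\varepsilon,\,N-q)$.

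Third, the reduction. Take any coloring $\Delta$ with parameter $q$. If $q>N/2$, replace $\Delta$ by $\overline\Delta$. This does not change the number of monochromatic solutions, and the new parameter is $N-q<N/2$. So the minimum defining $M_{-t}(n)$ is already attained over colorings with $q\le N/2$. Because $q$ is an integer, this is the same as $q\le\floor{N/2}$, which matches the range used in Theorem~\ref{thm:intro-exact}(b).

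None of these steps presents a real obstacle. The only point that needs care is that the size of $R_t$ is $t-1$, not $t$: this is what makes the new value of $a$ equal to $t-1-a$, with the point $0$ accounted for separately by $\varepsilon$.
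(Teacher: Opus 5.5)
Your proposal is correct and follows essentially the same argument as the paper. The count is invariant because color-swapping preserves monochromaticity, and the parameter map follows from $\abs{R_t}=t-1$, $\abs{P_N}=N$ and the swap of $\Delta(0)$. Replacing $\Delta$ by $\overline\Delta$ when $q>N/2$ then gives the reduction; you just spell out the bookkeeping that the paper summarizes as ``follows directly from the definitions.''
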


\begin{proof}
A triple is monochromatic under $\Delta$ if and only if it is monochromatic under $\overline\Delta$, because passing from $\Delta$ to $\overline\Delta$ merely interchanges the two colors. The parameter transformation follows directly from the definitions.

If $q>N/2$, then $\overline\Delta$ has $N-q<N/2$ positive integers of color $0$ and yields the same monochromatic solution count as $\Delta$. Therefore, when minimizing, it is enough to consider colorings with $q\le N/2$.
\end{proof}

Because every element of $Q_t$ is nonpositive and every element of $P_N$ is positive, the monotonicity condition $x_1\le x_2\le x_3$ forces every solution to belong to exactly one of the following four types:
\[
PPP,\qquad QQQ,\qquad QQP,\qquad QPP.
\]
Here, for example, $QQP$ means that $x_1,x_2\in Q_t$ and $x_3\in P_N$. We now compute the contributions of the last three types exactly.
Let \(N_{PPP}\), \(N_{QQQ}\), \(N_{QQP}\), and \(N_{QPP}\) denote the numbers of monochromatic solutions of the corresponding types, respectively.
We now compute \(N_{QQQ}\), \(N_{QQP}\), and \(N_{QPP}\) exactly.

We write
$s_0=a+\varepsilon$ and
$s_1=t-s_0=t-a-\varepsilon$.
Hence $s_0$ and $s_1$ are the numbers of nonpositive integers of colors $0$ and $1$, respectively.
\begin{lemma}\label{lem:QQQ}
For every coloring $\Delta$ with parameters $(a,\varepsilon,q)$,
\[
N_{QQQ}=\binom{s_0+2}{3}+\binom{s_1+2}{3}-1.
\]
\end{lemma}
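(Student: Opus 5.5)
The plan is to reuse the argument from the proof of Theorem~\ref{thm:nonpositive} almost verbatim, since $Q_t=[1-t,0]$ is exactly the interval $J$ of that proof in the case $n=t$. The argument has three steps.

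First, I will show that every nondecreasing triple $(x_1,x_2,x_3)$ with all entries in $Q_t$ is a solution to $\calE$, with the single exception of $(0,0,0)$. If $x_1<0$, then $x_1+x_2\le x_2-1<x_3$. If $x_1=0$, then $x_2=x_3=0$ is forced, and $0+0<0$ fails.

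Second, I will count the monochromatic nondecreasing triples in each color class of $Q_t$. Color $0$ occupies exactly $s_0=a+\varepsilon$ elements of $Q_t$: the $a$ negative integers of color $0$, together with $0$ itself precisely when $\varepsilon=1$. Color $1$ occupies the remaining $s_1=t-s_0$ elements. A nondecreasing triple drawn from a totally ordered set of size $s$ is the same thing as a multiset of size $3$ from that set, so there are $\binom{s+2}{3}$ of them. Summing over the two classes gives $\binom{s_0+2}{3}+\binom{s_1+2}{3}$ monochromatic nondecreasing triples in $Q_t$.

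Third, I will subtract the one non-solution. The triple $(0,0,0)$ is always monochromatic, whatever the color of $0$, and it has been counted exactly once, namely in the class containing $0$. Removing it yields $N_{QQQ}=\binom{s_0+2}{3}+\binom{s_1+2}{3}-1$.

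No step is a genuine obstacle. The only points needing care are the bookkeeping that $0$ is counted in the correct class via $\varepsilon$, and checking that the $-1$ applies in both cases $\varepsilon=0$ and $\varepsilon=1$.
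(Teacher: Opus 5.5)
Your proposal is correct and follows the paper's proof essentially step for step. You use the same observation that every nondecreasing triple in $Q_t$ solves $\calE$ except $(0,0,0)$, the same multiset count $\binom{s+2}{3}$ for each color class (with $s_0=a+\varepsilon$ and $s_1=t-s_0$), and the same single subtraction of $(0,0,0)$, which is counted once in whichever class contains $0$.
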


\begin{proof}
All three variables lie in $Q_t=[1-t,0]$. As in the proof of Theorem~\ref{thm:nonpositive}, every nondecreasing triple in $Q_t$ satisfies $\calE$, except for $(0,0,0)$.

The part of $Q_t$ with color $0$ contains $s_0$ elements, so it contributes $\binom{s_0+2}{3}$ nondecreasing monochromatic triples. Similarly, the part with color $1$ contributes $\binom{s_1+2}{3}$ such triples. Exactly one of the two colors contains $0$, so the exceptional triple $(0,0,0)$ has been counted once and must be removed. This yields the formula.
\end{proof}

\begin{lemma}\label{lem:QQP}
For every coloring $\Delta$ with parameters $(a,\varepsilon,q)$,
\[
N_{QQP}=\binom{s_0+1}{2}\,q+\binom{s_1+1}{2}\,(N-q).
\]
\end{lemma}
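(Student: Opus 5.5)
The plan is to show that every triple of type $QQP$ is automatically a solution to $\calE$, and then to count the monochromatic ones by counting nondecreasing pairs inside each color class of $Q_t$.

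\emph{Every $QQP$ triple is a solution.} Take $(x_1,x_2,x_3)$ with $x_1\le x_2$ in $Q_t=[1-t,0]$ and $x_3\in P_N=[1,N]$. Then $x_1+x_2\le 0<1\le x_3$. Also $x_2\le 0<x_3$, so the ordering $x_1\le x_2\le x_3$ holds with no further constraint. Hence the set of $QQP$ solutions is exactly the set of triples consisting of a nondecreasing pair $(x_1,x_2)$ in $Q_t$ together with an arbitrary $x_3\in P_N$. Unlike Lemma~\ref{lem:QQQ}, there is no exceptional triple here: even $(0,0,x_3)$ satisfies $0<x_3$.

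\emph{Counting by color.} A triple is monochromatic of color $0$ exactly when $x_1,x_2$ lie in the color-$0$ part of $Q_t$, which has $s_0=a+\varepsilon$ elements, and $x_3$ lies in the color-$0$ part of $P_N$, which has $q$ elements.
\begin{itemize}
\item The nondecreasing pairs from an $s_0$-element set correspond to multisets of size $2$, so there are $\binom{s_0+1}{2}$ of them.
\item Pairing each with one of the $q$ choices of $x_3$ gives $\binom{s_0+1}{2}q$ triples of color $0$.
\item By the same argument, color $1$ has $s_1$ nonpositive elements and $N-q$ positive ones, giving $\binom{s_1+1}{2}(N-q)$ triples.
\end{itemize}
Adding the two color classes gives the stated formula for $N_{QQP}$.

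There is no real obstacle in this argument. The only point needing care is to check that the endpoint $0$ does not cause an exception, which it does not because $x_3\ge 1$.
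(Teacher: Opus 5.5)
Your proposal is correct and follows essentially the same approach as the paper. Every $QQP$ triple is automatically a solution since $x_1+x_2\le 0<x_3$, so $N_{QQP}$ is $\binom{s_c+1}{2}$ nondecreasing pairs in the color-$c$ part of $Q_t$ times the number of positive integers of color $c$, summed over $c\in\{0,1\}$; your explicit check that $(0,0,x_3)$ is not exceptional is a point the paper leaves implicit.
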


\begin{proof}
Fix a color $c\in\{0,1\}$. The number of nondecreasing pairs $(u,v)$ of color $c$ inside $Q_t$ equals $\binom{s_c+1}{2}$, because choosing a nondecreasing pair from $s_c$ ordered values is the same as choosing a multiset of size $2$.
Let $x$ be any positive integer of the same color $c$. Since $u,v\in Q_t$, we have
$u\le v\le 0<x$,
and therefore
$u+v\le 0<x$.
Thus every triple $(u,v,x)$ is a solution to $\calE$.
For color $0$, there are $q$ choices for $x$ and $\binom{s_0+1}{2}$ choices for $(u,v)$. For color $1$, there are $N-q$ choices for $x$ and $\binom{s_1+1}{2}$ choices for $(u,v)$. Summing the two colors proves the claim.
\end{proof}

\begin{lemma}\label{lem:QPP}
For every coloring $\Delta$ with parameters $(a,\varepsilon,q)$,
\[
N_{QPP}=
a\binom{q+1}{2}
+(t-1-a)\binom{N-q+1}{2}
+\varepsilon\binom{q}{2}
+(1-\varepsilon)\binom{N-q}{2}.
\]
\end{lemma}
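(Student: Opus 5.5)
We count $QPP$ solutions by fixing $x_1=u\in Q_t$ and counting the nondecreasing pairs $(x_2,x_3)$ of positive integers of the same colour as $u$ that satisfy $u+x_2<x_3$. The count splits according to whether $u$ is negative or $u=0$. Since the constraint $x_2\le x_3$ involves only $x_2$ and $x_3$, it depends only on the colour class inside $P_N$, and not on how that class is positioned.

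\emph{Case $u\in R_t$, so $u\le -1$.} For any positive $x_2\le x_3$ we have $u+x_2\le x_2-1<x_3$. Hence every nondecreasing pair $(x_2,x_3)$ from the colour class of $u$ in $P_N$ works. That class has $q$ elements if $\Delta(u)=0$ and $N-q$ elements if $\Delta(u)=1$. The number of such pairs is the number of multisets of size $2$, namely $\binom{q+1}{2}$ or $\binom{N-q+1}{2}$ respectively. There are $a$ negative integers of colour $0$ and $t-1-a$ of colour $1$. Summing gives $a\binom{q+1}{2}+(t-1-a)\binom{N-q+1}{2}$.

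\emph{Case $u=0$.} Now the condition becomes $x_2<x_3$, so exactly the pairs with $x_2=x_3$ are excluded. The admissible pairs are the $2$-element subsets of the colour class of $0$ in $P_N$. If $\varepsilon=1$ this gives $\binom{q}{2}$, and if $\varepsilon=0$ it gives $\binom{N-q}{2}$. These combine as $\varepsilon\binom{q}{2}+(1-\varepsilon)\binom{N-q}{2}$.

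Every $QPP$ solution has $x_1\in Q_t$ and $x_2,x_3\in P_N$, so these two cases account for all of them, each counted exactly once. Adding the two contributions gives the formula in Lemma~\ref{lem:QPP}. There is no real obstacle here. The only point needing care is the $u=0$ case, where strictness removes the diagonal pairs: this is why the count there is $\binom{\cdot}{2}$ rather than the $\binom{\cdot+1}{2}$ of the negative case.
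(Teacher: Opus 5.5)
Your proposal is correct and follows essentially the same route as the paper. You fix $x_1\in Q_t$, count all multisets of size $2$ from the matching colour class in $P_N$ when $x_1<0$, and count only the $2$-element subsets when $x_1=0$ because strictness excludes $x_2=x_3$.
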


\begin{proof}
We separate the negative first coordinates from the first coordinate equal to $0$.

Let $x_1<0$ be of color $0$. For any  pair $(x_2,x_3)$ with color $0$ satisfying
$x_2\le x_3, x_2,x_3\in P_N$,
we have
$x_1+x_2<x_2\le x_3$,
so $(x_1,x_2,x_3)$ is a solution to $\calE$. The number of nondecreasing pairs with color $0$ in $P_N$ is $\binom{q+1}{2}$. Since there are exactly $a$ negative integers of color $0$, the total contribution from negative first coordinates with color $0$ is
$a\binom{q+1}{2}$.
Similarly, the contribution from negative first coordinates with color $1$ is
$(t-1-a)\binom{N-q+1}{2}$.

Then, let $x_1=0$.
If $\varepsilon=1$, then $0$ has color $0$. A triple $(0,x_2,x_3)$ with $x_2,x_3\in P_N$ and both of color $0$ is a solution to $\calE$ if and only if
$0+x_2<x_3$,
that is, if and only if $x_2<x_3$. The number of strictly increasing ordered pairs chosen from $q$ positive integers is $\binom{q}{2}$. Thus the contribution from $0$ is $\varepsilon\binom{q}{2}$.
Likewise, if $0$ has color $1$, then the contribution is $(1-\varepsilon)\binom{N-q}{2}$.

Adding the two parts completes the proof.
\end{proof}

In the end, we show the exact reduction theorem.

\begin{theorem}\label{thm:exact-reduction}
Let $t\ge 2$, let $n\ge t+1$, and put $N=n-t$. Then
\[
M_{-t}(n)=
\min_{\varepsilon\in\{0,1\}}
\ \min_{0\le a\le t-1}
\ \min_{0\le q\le \floor{N/2}}
G_{t,N,a,\varepsilon}(q),
\]
where
\begin{align*}
G_{t,N,a,\varepsilon}(q)
={}&
S(1,q)+S(q+1,N) \\
&+\binom{a+\varepsilon+1}{2}\,q
+\binom{t-a-\varepsilon+1}{2}\,(N-q) \\
&+a\binom{q+1}{2}
+(t-1-a)\binom{N-q+1}{2} \\
&+\varepsilon\binom{q}{2}
+(1-\varepsilon)\binom{N-q}{2} \\
&+\binom{a+\varepsilon+2}{3}
+\binom{t-a-\varepsilon+2}{3}-1.
\end{align*}
Moreover, for every fixed triple $(a,\varepsilon,q)$ with $0\le q\le N/2$, equality is attained by a coloring whose positive part is the two-block coloring
\[
[1,q]\text{ in color }0,
\qquad
[q+1,N]\text{ in color }1.
\]
\end{theorem}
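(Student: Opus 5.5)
The plan is to decompose the monochromatic count of an arbitrary coloring by type, bound the one type that is not already exact, and then exhibit colorings attaining the bound. Fix a coloring $\Delta$ of $[1-t,n-t]$ with parameters $(a,\varepsilon,q)$. By Lemma~\ref{lem:complement} we may assume $q\le N/2$. The complement map sends $(a,\varepsilon,q)$ to $(t-1-a,1-\varepsilon,N-q)$, and it is a bijection on the full parameter set. Hence restricting $q$ to $[0,\floor{N/2}]$ while letting $a$ and $\varepsilon$ range freely loses nothing.

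We write
\[
s(\Delta)=N_{PPP}+N_{QQQ}+N_{QQP}+N_{QPP}.
\]
Lemmas~\ref{lem:QQQ}, \ref{lem:QQP} and \ref{lem:QPP} give the last three terms exactly. Each depends only on $(a,\varepsilon,q)$, and not on which positive integers carry which color. Their sum is precisely every term of $G_{t,N,a,\varepsilon}(q)$ except $S(1,q)+S(q+1,N)$. It therefore remains to show
\[
N_{PPP}\ge S(1,q)+S(q+1,N).
\]
$N_{PPP}$ is the number of monochromatic solutions of the restriction $\Delta|_{P_N}$, which is a $2$-coloring of $[1,N]$ with exactly $q\le N/2$ elements of color $0$. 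Theorem~\ref{thm1:kosek}, applied with $k=0$, $n=N$ and $m=q$, gives exactly this bound. Consequently $s(\Delta)\ge G_{t,N,a,\varepsilon}(q)\ge$ the stated triple minimum.

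For the reverse inequality, take any $(a,\varepsilon,q)$ with $q\le\floor{N/2}$. Color the nonpositive part so that exactly $a$ negatives have color $0$ (any $a$ of them) and $\Delta(0)=0$ iff $\varepsilon=1$. Give the positive part the two-block coloring $[1,q]\mapsto 0$, $[q+1,N]\mapsto 1$. The $QQQ$, $QQP$ and $QPP$ counts again equal their lemma values, since those depend only on the parameters. The $PPP$ count is the number of solutions inside $[1,q]$ plus those inside $[q+1,N]$, which is $S(1,q)+S(q+1,N)$ by definition of $S$. So this coloring achieves $G_{t,N,a,\varepsilon}(q)$. Minimizing over $(a,\varepsilon,q)$ then gives equality and also proves the ``moreover'' clause.

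The main point needing care is confirming that Theorem~\ref{thm1:kosek} applies in the edge cases $q=0$ and $k=0$. The theorem allows $k\ge0$ and $m\ge0$, so both are covered. A second check is that the four types really partition all nondecreasing triples with no double counting; this was noted before the lemmas. Everything else is bookkeeping.
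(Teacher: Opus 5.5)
Your proposal is correct and follows essentially the same route as the paper. Both use the complement symmetry to reduce to $q\le N/2$, the four-type decomposition with the three exact lemmas, Theorem~\ref{thm1:kosek} (with $k=0$) for the $PPP$ lower bound, and the explicit two-block construction for the reverse inequality; reading the $PPP$ count of that construction directly off the definition of $S$ is, if anything, slightly cleaner than the paper's appeal to Theorem~\ref{thm1:kosek} at that step.
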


\begin{proof}
By Lemma~\ref{lem:complement}, there exists an optimal coloring with $q\le N/2$. Fix such a coloring $\Delta$ and let $(a,\varepsilon,q)$ be its parameters.
We decompose the monochromatic solutions counted by $s(\Delta;1-t,n-t)$ into the four types $PPP$, $QQQ$, $QQP$, and $QPP$.

We now consider Type $PPP$.
All three variables lie in the positive interval $P_N=[1,N]$. Inside $P_N$, the coloring $\Delta$ has exactly $q$ integers of color $0$ and $N-q$ integers of color $1$. Since $q\le N/2$, Theorem~\ref{thm1:kosek} applies and yields
$N_{PPP}\ge S(1,q)+S(q+1,N)$.

Equality holds exactly when the restriction of $\Delta$ to $P_N$ is the two-block coloring
\[
[1,q]\text{ in color }0,
\qquad
[q+1,N]\text{ in color }1.
\]

Then we consider Types $QQQ$, $QQP$, and $QPP$.
By Lemmas~\ref{lem:QQQ}, \ref{lem:QQP}, and \ref{lem:QPP}, these three contributions are
\begin{align*}
N_{QQQ}&=\binom{s_0+2}{3}+\binom{s_1+2}{3}-1,\\
N_{QQP}&=\binom{s_0+1}{2}\,q+\binom{s_1+1}{2}\,(N-q),\\
N_{QPP}&=a\binom{q+1}{2}+(t-1-a)\binom{N-q+1}{2}+\varepsilon\binom{q}{2}+(1-\varepsilon)\binom{N-q}{2},
\end{align*}
where $s_0=a+\varepsilon$ and $s_1=t-a-\varepsilon$.

Adding the four types gives
$s(\Delta;1-t,n-t)\ge G_{t,N,a,\varepsilon}(q)$.
Since this is true for every optimal coloring with $q\le N/2$, we obtain the lower bound
\[
M_{-t}(n)\ge
\min_{\varepsilon\in\{0,1\}}
\ \min_{0\le a\le t-1}
\ \min_{0\le q\le \floor{N/2}}
G_{t,N,a,\varepsilon}(q).
\]

To prove the reverse inequality, fix arbitrary parameters
\[
\varepsilon\in\{0,1\},\qquad 0\le a\le t-1,\qquad 0\le q\le \floor{N/2}.
\]
Construct a coloring $\Delta$ as follows:
\begin{itemize}[leftmargin=2em]
\item color exactly $a$ of the $t-1$ negative integers in $R_t=[1-t,-1]$ by $0$ and the remaining $t-1-a$ by $1$;
\item color $0$ by $\varepsilon$;
\item color the positive interval $P_N=[1,N]$ by the two-block coloring
\[
[1,q]\text{ in color }0,
\qquad
[q+1,N]\text{ in color }1.
\]
\end{itemize}
For this coloring, the $PPP$ contribution is exactly $S(1,q)+S(q+1,N)$ by Theorem~\ref{thm1:kosek}, and the other three contributions are exactly the quantities in Lemmas~\ref{lem:QQQ}, \ref{lem:QQP}, and \ref{lem:QPP}. Hence
\[
s(\Delta;1-t,n-t)=G_{t,N,a,\varepsilon}(q).
\]
Since the parameters were arbitrary, we obtain the reverse inequality
\[
M_{-t}(n)\le
\min_{\varepsilon\in\{0,1\}}
\ \min_{0\le a\le t-1}
\ \min_{0\le q\le \floor{N/2}}
G_{t,N,a,\varepsilon}(q).
\]
Combining the two inequalities completes the proof.
\end{proof}

\section{Cubic asymptotics}\label{sec:asymptotic}

We now derive the cubic main term from the exact reduction theorem.
We write \(F(N)=O_t(N^\alpha)\) if there exists a constant \(C_t>0\), depending only on \(t\), such that
$|F(N)|\le C_t N^\alpha$
for all sufficiently large \(N\); and we write \(F(N)=o_t(N^\alpha)\) if
$\lim_{N\to\infty}\frac{F(N)}{N^\alpha}=0$
for each fixed \(t\).

\begin{theorem}\label{thm:asymptotic}
For every fixed integer $k\le -2$,
\[
M_k(n)=\frac{n^3}{12(1+2\sqrt2)^2}(1+o_k(1)).
\]
\end{theorem}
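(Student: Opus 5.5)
The approach is to sandwich $M_{-t}(n)$ between $M_0(N)$ plus a quadratic error term, using the exact reduction in Theorem~\ref{thm:exact-reduction}. Here $N=n-t$ and $t=-k$ is fixed. Observe first that $\min_{0\le q\le \floor{N/2}}\bigl(S(1,q)+S(q+1,N)\bigr)=M_0(N)$. This is the identity displayed after Theorem~\ref{thm1:kosek} with $k=0$. By Theorem~\ref{thm7:KRSS} with $k=0$, it equals $CN^3(1+o(1))$, where $C=\frac{1}{12(1+2\sqrt2)^2}$.

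\textbf{Lower bound.} Every term of $G_{t,N,a,\varepsilon}(q)$ other than $S(1,q)+S(q+1,N)$ is nonnegative, with one exception: the term $\binom{s_0+2}{3}+\binom{s_1+2}{3}-1$. Since $s_0+s_1=t\ge 2$, this term is still $\ge 0$. Hence $G_{t,N,a,\varepsilon}(q)\ge S(1,q)+S(q+1,N)\ge M_0(N)$ for every admissible triple $(a,\varepsilon,q)$. Taking the minimum gives $M_{-t}(n)\ge M_0(N)$.

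\textbf{Upper bound.} Choose $q$ to be a minimizer of $S(1,q)+S(q+1,N)$ over $0\le q\le\floor{N/2}$, and fix, say, $a=0$ and $\varepsilon=0$. Bound the remaining terms of $G$ crudely using $0\le a\le t-1$, $s_0,s_1\le t$ and $q,N-q\le N$:
\begin{itemize}
\item the $QQP$ terms are at most $2\binom{t+1}{2}N$;
\item the $QPP$ terms are at most $t\binom{N+1}{2}+\binom{N}{2}$;
\item the $QQQ$ term is at most $2\binom{t+2}{3}$.
\end{itemize}
All of these together are $O_t(N^2)$. So $M_{-t}(n)\le M_0(N)+O_t(N^2)$. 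Combining with the lower bound gives $M_{-t}(n)=CN^3(1+o(1))+O_t(N^2)=CN^3(1+o_t(1))$.

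\textbf{Conversion to $n$.} It remains to pass from $N$ to $n$. Since $N=n-t$ with $t$ fixed, $N^3=n^3(1+O_t(1/n))$. Therefore $M_k(n)=Cn^3(1+o_k(1))$, as claimed.

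\textbf{Main obstacle.} The only delicate point is ensuring the error terms genuinely stay quadratic. The $QPP$ contribution $a\binom{q+1}{2}+(t-1-a)\binom{N-q+1}{2}$ is quadratic in $N$ with coefficient up to about $t/2$. This is harmless for a $(1+o(1))$ statement, but it shows why an $O_k(n^2)$ refinement, as in Theorem~\ref{thm:intro-asymptotic}, would need the sharper statement $M_0(N)=CN^3+O(N^2)$. That sharper statement I would derive from the closed form in Proposition~\ref{prop:Sab}, by minimizing the explicit cubic in $q$ near $q\approx N/(1+2\sqrt2)$. For the present theorem, however, Theorem~\ref{thm7:KRSS} suffices directly.
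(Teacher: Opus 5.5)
Your argument is correct and is essentially the paper's proof: the exact reduction, the observation that everything in $G_{t,N,a,\varepsilon}(q)$ beyond $S(1,q)+S(q+1,N)$ is $O_t(N^2)$ uniformly, Theorem~\ref{thm7:KRSS} at $k=0$, and the shift $N=n-t$. Your explicit identity $\min_{q}(S(1,q)+S(q+1,N))=M_0(N)$ is what the paper uses implicitly, and your lower/upper split just spells out its uniform-error step. One small slip in your side remark: $q^3+(N-2q)^3$ is minimized at $q\approx \sqrt2\,N/(1+2\sqrt2)$, where $N-2q\approx N/(1+2\sqrt2)$, not at $q\approx N/(1+2\sqrt2)$, though this does not affect the theorem.
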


\begin{proof}
Fix $t=-k\ge 2$ and put
$N=n-t$.
For $n\ge t+1$, Theorem~\ref{thm:exact-reduction} gives
\[
M_{-t}(n)=
\min_{\varepsilon\in\{0,1\}}
\ \min_{0\le a\le t-1}
\ \min_{0\le q\le \floor{N/2}}
G_{t,N,a,\varepsilon}(q),
\]
where
\begin{align*}
G_{t,N,a,\varepsilon}(q)
={}&
S(1,q)+S(q+1,N) \\
&+\binom{a+\varepsilon+1}{2}\,q
+\binom{t-a-\varepsilon+1}{2}\,(N-q) \\
&+a\binom{q+1}{2}
+(t-1-a)\binom{N-q+1}{2} \\
&+\varepsilon\binom{q}{2}
+(1-\varepsilon)\binom{N-q}{2} \\
&+\binom{a+\varepsilon+2}{3}
+\binom{t-a-\varepsilon+2}{3}-1.
\end{align*}
We now simplify this formula to its cubic main term.

Since $t$ is fixed, the parameters satisfy
$0\le a\le t-1,
\varepsilon\in\{0,1\}$, and
$0\le q\le \frac N2$.
Hence the coefficients
\[
\binom{a+\varepsilon+1}{2},\quad
\binom{t-a-\varepsilon+1}{2},\quad
a,\quad t-1-a,\quad \varepsilon,\quad 1-\varepsilon
\]
are all bounded in terms of $t$ alone. Therefore the linear terms in $q$ and $N-q$ are $O_t(N)$, the binomial terms of order two are $O_t(N^2)$, and the constant term is $O_t(1)$. Uniformly for all admissible $(a,\varepsilon,q)$,
\begin{equation}\label{eq:G-first-reduction}
G_{t,N,a,\varepsilon}(q)=S(1,q)+S(q+1,N)+O_t(N^2).
\end{equation}
Therefore, 
\[
M_{-t}(n)=
\min_{\varepsilon\in\{0,1\}}
\ \min_{0\le a\le t-1}
\ \min_{0\le q\le \floor{N/2}}
G_{t,N,a,\varepsilon}(q)=\min_{0\le q\le \floor{N/2}} (S(1,q)+S(q+1,N)+O_t(N^2)).
\]
From Theorem \ref{thm7:KRSS}, we have
\[
M_{-t}(n)=\frac{N^3}{12(1+2\sqrt2)^2}(1+o_t(1)).
\]
Since $t$ is fixed, it follows that 
$N=n-t=n+O_t(1)$,
and hence
$N^3=n^3+O_t(n^2)$.
Therefore
\[
M_{-t}(n)=\frac{n^3}{12(1+2\sqrt2)^2}(1+o_t(1)).
\]
Replacing $t$ by $-k$ gives the equivalent formulation for fixed $k\le -2$.
\end{proof}

\section{Concluding remarks}

Theorem~\ref{thm:exact-reduction} shows that the difficulty of the case $k\le -2$ is localized in the fixed nonpositive block. Once the colors on $[1-t,0]$ are encoded by the finite parameters
\[
a=\abs{\Delta^{-1}(0)\cap[1-t,-1]}
\qquad\text{and}\qquad
\varepsilon=\begin{cases}1,&\Delta(0)=0,\\0,&\Delta(0)=1,\end{cases}
\]
the entire optimization problem is reduced to a single integer parameter $q$, the number of positive integers of color $0$. For each triple $(a,\varepsilon,q)$, the positive part of an optimal coloring is exactly the KRSS two-block coloring.

In particular, the open problem left in \cite{KRSS} for $k\le -2$ admits an exact finite reduction and the same cubic asymptotic constant as in the nonnegative case. It would be interesting to know whether a similar phenomenon holds for more general systems of linear inequalities in which a fixed nonpositive block interacts with a long positive tail.

\end{document}